\DeclareSymbolFont{bbold}{U}{bbold}{m}{n}
\DeclareSymbolFontAlphabet{\mathbbm}{bbold}
\title{Obstructions to lifting abelian subalgebras of corona algebras}
\theoremstyle{plain}
	\newtheorem{theorem*}{Theorem}
	\newtheorem{theorem}{Theorem}[section]
	\newtheorem{proposition}[theorem]{Proposition}
	\newtheorem{lemma}[theorem]{Lemma}
	\newtheorem{corollary}[theorem]{Corollary}
\theoremstyle{definition}
	\newtheorem{example}[theorem]{Example}
	\newtheorem{question}[theorem]{Question}
	\newtheorem*{acknowledgements}{Acknowledgements}
\newcommand{\C}{\mathbb{C}}
\newcommand{\N}{\mathbb{N}}
\newcommand{\set}[1]{\{#1\}}					
\newcommand{\cstar}{$\mathrm{C}^\ast$}
\renewcommand{\phi}{\varphi} 
\newcommand{\Addresses}{{
  \bigskip
  \footnotesize

   A.~Vaccaro, \textsc{Department of Mathematics, University of Pisa, Largo Bruno Pontecorvo 5
    Pisa, Italy, 56127 - Department of Mathematics and Statistics, York University, 4700 Keelee Street,
    North York, Ontario, Canada, M3J 1P3}\par\nopagebreak
  \textit{E-mail address}: \texttt{vaccaro@mail.dm.unipi.it}\par\nopagebreak
  \textit{URL}: \texttt{http://people.dm.unipi.it/vaccaro/index.html}
  }}
\author{Andrea Vaccaro}
\date{}
\keywords{corona algebra, commuting self-adjoint elements, lifting.}
\begin{document}
\maketitle
\begin{abstract}
Let $A$ be a non-commutative, non-unital \cstar-algebra.
Given a set of commuting positive elements in the corona algebra $Q(A)$, we study
some obstructions to the existence of a
commutative lifting of such set to the multiplier algebra $M(A)$.
Our focus are the obstructions caused by the size of the collection we want to lift. 
It is known that no obstacles show up when lifting a countable family of commuting projections,
or of pairwise orthogonal positive elements. However, this is not
the case for larger collections. We prove in fact that
for every primitive, non-unital, $\sigma$-unital \cstar-algebra $A$,
there exists an uncountable set of pairwise orthogonal positive elements in $Q(A)$
such that no uncountable subset of it can be lifted
to a set of commuting elements of $M(A)$. Moreover, the positive elements in $Q(A)$
can be chosen to be projections if $A$ has real rank zero.
\end{abstract}

\section{Introduction}
Let $A$ be a non-unital \cstar-algebra,
denote its multiplier algebra by $M(A)$,
its corona algebra (namely $M(A)/A$) by $Q(A)$, and the quotient
map from $M(A)$ onto $Q(A)$ by $\pi$.
A \emph{lifting} in $M(A)$ of a set $B \subseteq Q(A)$ is a set
$C \subseteq M(A)$ such that $\pi[C]=B$. The study of which
properties of $B\subseteq Q(A)$ can be preserved in
a lifting, as well as the analysis of the relations between $B$ and its preimage
$\pi^{-1}[B]$, has produced a rich theory with strong connections
to the study of stable relations in \cstar-algebras.
A general introduction to this subject can be found in \cite{loring}.

This note focuses on liftings of abelian subalgebras of corona algebras.
This topic has been widely studied, for instance, as a mean
to produce interesting examples of $*$-algebras, and
in the investigation of the masas (maximal abelian subalgebras)
of the Calkin algebra $Q(H)$.
In \cite{akem}, for example,
the authors produce, by means of a lifting,
a nonseparable \cstar-algebra whose abelian subalgebras are all separable.
Their proof assumes the Continuum Hypothesis, which was later shown to be not necessary
(see \cite[Corollary 6.7]{popa} and also \cite{bice}). Another application of the Continuum Hypothesis
to liftings of abelian subalgebras of corona algebras can be found in \cite{anderson}.
Here the author builds a masa of $Q(H)$ which is generated by its projections
and does not lift to a masa in $B(H)$. In this case it is not known whether the Continuum
Hypothesis can be dropped (see \cite{shelah2011masas}).
More recently, the study of liftings led to the first example of an amenable nonseparable Banach algebra which is not isomorphic to a \cstar-algebra (see \cite{farah};
see also \cite{vigna}).

In this paper we focus on the following problem.
Let $A$ be a non-commutative, non-unital \cstar-algebra,
and let $B$ be a commutative family in $Q(A)$.
What kind of obstructions could prevent the existence of a commutative lifting of $B$ in $M(A)$?
We consider collections with
various properties, but our main concern and focus is the role played by the cardinality
of the set that we want to lift. The following table summarizes all the cases that we are going
to analyze. The symbols
``$\checkmark$'' and ``$\times$'' indicate whether it is possible or not to have a lifting
for collections on the left column whose size
is the cardinal in the top line.
\vspace{0.1cm}
\begin{adjustwidth}{-2cm}{-2cm}
\begin{center}
 \begin{tabular}{| c | c | c | c |}
    \hline
    $Q(A) \to M(A)$ & $<\aleph_0$ & $ \aleph_0$ & $ \aleph_1$ \\ \hline
    Commuting self-adjoint $\to$ Commuting self-adjoint & $\times$& $\times$ & $\times$ \\ \hline
    Commuting projections $\to$ Commuting projections & $\checkmark$ in $Q(H)$ & $\checkmark$ in $Q(H)$ & $\times$ \\ \hline
    Commuting projections $\to$ Commuting positive & $\checkmark$& $\checkmark$ & $\times$ \\ \hline
    Orthogonal positive $\to$ Orthogonal positive& $\checkmark$ & $\checkmark$ & $\times$ \\ \hline
    Orthogonal positive $\to$ Commuting positive & $\checkmark$& $\checkmark$ & $\times$ \\ \hline
        \end{tabular}
        \end{center}
        \end{adjustwidth}
        \vspace{0.2cm}
It is clear from the table that starting with an uncountable collection
is a fatal obstruction.
We also remark that the two columns in the middle, representing the lifting problem for finite and
countable collections, have the same values. One reason for this phenomenon is that the only obstructions in this scenario are of K-theoretic nature and involve only a finite number
of elements, as we shall see in the next paragraph (see also \cite{ken2}).
This situation also relates to other compactness phenomena (at least at the countable level)
that corona algebras of
$\sigma$-unital algebras satisfy, due to their partial countable saturation (see \cite{satur}).
Most of the results in the table about finite and countable families
are already known (\cite{loring}, \cite[Lemma 5.34]{setoperator}, \cite[Lemma 10.1.12]{loring}).
The main contribution of this paper concerns the right column,
for which some theorems about projections in the Calkin algebra have already been proved
(\cite[Theorem 5.35]{setoperator}, \cite{bice}).

Let $A$ be $K(H)$, the algebra of the compact operators on a separable Hilbert
space $H$, so that $M(A)=B(H)$ and $Q(A)=Q(H)$.
By a well-known K-theoretic obstruction, the unilateral shift is a normal element
in $Q(H)$ which does not lift to a normal element in $B(H)$
(more on this in \cite{bdf} and \cite{david}).
An element is normal if and only if its real and imaginary part commute. This proves
that it is not always possible to lift a couple of commuting self-adjoint
elements in a corona algebra to commuting self-adjoint elements in the multiplier algebra.

In order to bypass this obstruction, we add some conditions to the collection which we
want to lift. 
In \cite[Lemma $5.34$]{setoperator} it is proved that any countable family of
commuting projections in the Calkin algebra can be lifted to a family
of commuting projections in $B(H)$. Moreover, the authors provide a lifting of
simultaneously diagonalizable projections. Proving a more general statement about liftings,
in Section \ref{sctn:1} we show
that any countable collection of commuting projections in a corona algebra can be lifted
to a commutative family of positive elements in the
multiplier algebra\footnote{We remark that it is not always possible to lift
projections in a corona algebra to projections in the multiplier algebra. 
Such lifting is not possible for instance when $Q(A)$ has real rank zero but
$M(A)$ has not, which is the case for $A=Q(H) \otimes K(H)$ (see
\cite[Example 2.7(iii)]{zhang})
or $A=\mathcal{Z} \otimes K(H)$, where $\mathcal{Z}$ is the
Jiang-Su algebra (see \cite{linng}).}.

Two elements in a \cstar-algebra are \emph{orthogonal} if their product is zero.
Any countable family of orthogonal positive elements in a corona algebra admits a commutative lifting.
This is a consequence of the more general result \cite[Lemma 10.1.12]{loring}, which is relaid
in this paper as Proposition \ref{prop}.

In general, we cannot expect to be able to generalize verbatim the above result
for uncountable families of orthogonal positive elements.
This is the case since, by a cardinality obstruction, a multiplier
algebra $M(A)$ which can be faithfully represented on a separable Hilbert space $H$,
cannot contain an uncountable collection of orthogonal positive elements. The existence of such
a collection in $M(A)$
(and thus in $B(H)$) would in fact imply the existence
of an uncountable set of orthogonal vectors in $H$, contradicting the separability of $H$.

We could still ask whether it is possible
to lift an uncountable family of orthogonal positive elements
to a family of commuting positive elements.
This leads to an obstruction of set-theoretic nature. In Theorem $5.35$ of
\cite{setoperator}, it is shown that there exists
an $\aleph_1$-sized collection of orthogonal projections in the Calkin algebra whose
uncountable subsets cannot be lifted
to families of simultaneously diagonalizable projections in $B(H)$. This result is refined
in Theorem $7$ of \cite{bice}, where the authors provide
an $\aleph_1$-sized set of orthogonal projections in $Q(H)$ which
contains no uncountable subset that lifts to a collection of commuting operators in $B(H)$.
The main result of this paper is a generalization of Theorem 7 in \cite{bice}.
A \cstar-algebra is $\sigma$\emph{-unital}
if it has a countable approximate unit, and it is
\emph{primitive} if it admits a faithful irreducible representation.

\begin{theorem} \label{thrm}
Assume $A$ is a primitive, non-unital, $\sigma$-unital \cstar-algebra.
Then there is a collection of $\aleph_1$ pairwise
orthogonal positive elements of $Q(A)$ containing no uncountable
subset that simultaneously lifts to commuting elements in $M(A)$.
\end{theorem}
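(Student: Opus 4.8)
\emph{Strategy.} I would follow the blueprint of Bice's argument for the Calkin algebra \cite{bice} (itself building on \cite{setoperator}), using a quasicentral approximate identity of $A$ to manufacture the ``block decomposition'' which in $B(H)$ comes for free from an orthonormal basis, and using primitivity both to supply the room needed inside each block and to provide a faithful representation on a \emph{separable} Hilbert space, in which the analytic part of the argument can be run.

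\emph{The family.} Since $A$ is $\sigma$-unital, fix an increasing approximate identity $(e_n)_{n\in\N}$ with $e_ne_{n+1}=e_n$. A short computation with these relations shows that the ``layers'' $\ell_k:=e_{2k+1}-e_{2k}$ are pairwise orthogonal positive contractions, and that any norm-bounded sum of positive elements each dominated by a distinct layer converges strictly in $M(A)$, the only estimate needed being $\lVert(1-e_n)^{1/2}a\rVert^2\le\lVert a\rVert\,\lVert(1-e_n)a\rVert\to 0$ for $a\in A$. As $A$ is non-unital it is infinite-dimensional, so (relabelling, and if necessary refining, so as to discard any trivial terms) each layer generates a nonzero hereditary subalgebra; pick in the $k$-th layer a positive contraction $h_k$ with $\lVert h_k\rVert=1$ — a projection if $A$ has real rank zero. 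Fix a Luzin almost disjoint family $\{A_\alpha:\alpha<\omega_1\}$ on $\N$ (such families exist in \ZFC) and set $b_\alpha:=\sum_{k\in A_\alpha}h_k\in M(A)_+$. For $\alpha\ne\beta$ one has $b_\alpha b_\beta=\sum_{k\in A_\alpha\cap A_\beta}h_k^2\in A$, since $h_k$ and $h_j$ are orthogonal for $k\ne j$ and $A_\alpha\cap A_\beta$ is finite; hence $\{\pi(b_\alpha):\alpha<\omega_1\}$ is a set of $\aleph_1$ pairwise orthogonal positive elements of $C(A)$, and $b_\alpha\notin A$ because $b_\alpha$ carries norm-$1$ mass in arbitrarily high layers, so the $\pi(b_\alpha)$ are nonzero (and pairwise distinct).

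\emph{No uncountable commuting lift.} Suppose, towards a contradiction, that $Y\subseteq\omega_1$ is uncountable and $\{a_\alpha:\alpha\in Y\}\subseteq M(A)_+$ is a commuting family with $\pi(a_\alpha)=\pi(b_\alpha)$. After applying a continuous function I may assume $0\le a_\alpha\le1$. Writing $w_k:=\ell_k^{1/2}$, the relation $a_\alpha-b_\alpha\in A$ forces $\lVert w_ka_\alpha w_k-h_k\rVert\to0$ along $k\in A_\alpha$ and $\lVert w_ka_\alpha w_k\rVert\to0$ along $k\notin A_\alpha$, so $A_\alpha$ is the ``essential support'' of $a_\alpha$ in the layer picture. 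Now fix a faithful irreducible representation of $A$ on a separable Hilbert space $H$ (available for primitive $\sigma$-unital $A$), so that $M(A)\subseteq B(H)$ non-degenerately. Inside $B(H)$ the $a_\alpha$ generate an abelian von Neumann algebra with separable predual; a sequence of reductions in the spirit of \cite{bice} — splitting off the non-atomic part (which, by the essential-orthogonality just recorded, can carry only countably many of the $a_\alpha$) and shrinking $Y$ accordingly — should place us in the situation where the $a_\alpha$ are simultaneously ``diagonal'' with respect to one decomposition of $H$, producing a unitary $U$ on $H$ that relates the layer decomposition to the new diagonalizing one, together with an almost disjoint family $\{B_\alpha:\alpha\in Y\}$ on the new index set for which $U$ approximately intertwines the diagonal data coded by $A_\alpha$ and by $B_\alpha$.

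\emph{The combinatorial contradiction, and the main obstacle.} A single unitary can distort coordinates only ``boundedly'', so a pigeonhole argument on $\omega_1$ produces an uncountable $Y'\subseteq Y$ and an $N\in\N$ past which, uniformly in $\alpha\in Y'$, $U$ carries the layers indexed by $A_\alpha$ into the coordinates indexed by $B_\alpha$. Feeding this back into the approximate intertwining shows that $\{A_\alpha:\alpha\in Y'\}$ is separated by a single subset of $\N$ (equivalently, that the intersections $A_\alpha\cap A_\beta$ with $\alpha,\beta\in Y'$ stay below $N$), contradicting the defining property of a Luzin family and completing the proof. I expect the real difficulty to lie in the middle step: in the Calkin algebra ``modulo $A$'' means ``modulo the compacts'', which interacts cleanly with spectral projections and with the atomic/non-atomic decomposition, whereas here an element of $A$ need not be compact in $B(H)$. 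The bulk of the work should be re-proving the reduction to ``simultaneously diagonal'' relative to $A$ — using the corners $w_kM(A)w_k\subseteq A$ as the replacement for compactness — and checking that the unitary-distortion estimate survives this translation; it is precisely here that primitivity enters, to make the separable faithful representation, and hence this whole Hilbert-space argument, available.
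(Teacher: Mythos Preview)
Your construction of the family has a fatal flaw: the lifts $b_\alpha=\sum_{k\in A_\alpha}h_k$ that you write down \emph{already commute with each other}. Indeed, the $h_k$ live in pairwise orthogonal layers, so $h_jh_k=0$ for $j\ne k$, whence $b_\alpha b_\beta=\sum_{k\in A_\alpha\cap A_\beta}h_k^2=b_\beta b_\alpha$ for all $\alpha,\beta$. Thus $\{b_\alpha:\alpha<\omega_1\}$ is itself an uncountable commuting lift of $\{\pi(b_\alpha)\}$, and your family cannot witness the theorem. The almost-disjoint combinatorics on the index sets $A_\alpha$ is irrelevant here: whatever Luzin-type obstruction you impose on the $A_\alpha$'s, the $b_\alpha$'s still commute on the nose because they are built from a \emph{single} commuting sequence $(h_k)$.

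This is exactly the place where the paper's proof diverges from the Calkin-algebra argument you are trying to transplant. The paper does not choose the block elements in advance and then sum over almost-disjoint sets; instead it runs a transfinite recursion in which, at stage $\beta$, each block $c^\beta_k$ is manufactured via Kadison's Transitivity Theorem so as to \emph{fail} to commute with a specific earlier $a_j$ (modulo any perturbation of the form $e_n$). Concretely, having recorded at earlier stages a pair of orthogonal unit vectors $\xi^j_m,\eta^j_m$ on which $c^j_m$ acts as $1$ and $0$, one uses KTT inside the same hereditary corner to produce $c^\beta_{M_{i,j}}$ acting as $1$ and $0$ on the rotated pair $(\xi^j_m\pm\eta^j_m)/\sqrt{2}$; this forces a uniform lower bound on $\lVert[(a_j+e_i),(a_\beta+e_i)]\rVert$. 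The contradiction with a hypothetical commuting lift is then a direct pigeonhole on the compact perturbations $d_\beta$ (uniformly approximated by $d_\beta e_n$ for a single $n$), with no von Neumann algebra generated, no separability of $H$ assumed, and no diagonalization step. Your proposed middle step --- passing to a separable representation, separating atomic from non-atomic parts, and extracting a single intertwining unitary --- is both unnecessary and, as written, not justified: elements of $A$ need not be compact on $H$, and you give no mechanism by which the layer structure of $A$ interacts with the spectral decomposition of the abelian von Neumann algebra generated by the $a_\alpha$.
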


\begin{corollary} \label{crlry}
Assume $A$ is a primitive, real rank zero, non-unital, $\sigma$-unital \cstar-algebra.
Then there is a collection of $\aleph_1$ pairwise
orthogonal projections of $Q(A)$ containing no uncountable
subset that simultaneously lifts to commuting elements in $M(A)$.
\end{corollary}

The proof of Theorem \ref{thrm} is inspired by the combinatorics used in
\cite{bice} and \cite{setoperator}, which goes back to Luzin and
Hausdorff, and to the study of uncountable
almost disjoint families of subsets of $\N$ and Luzin's families (see \cite{luzin}).
We remark that no additional set theoretic assumption (such as the Continuum Hypothesis)
is required in our proof.

The paper is structured as follows: in Section \ref{sctn:1} we outline the results
needed to settle the problem of liftings of countable families of commuting projections and
of orthogonal positive elements. Section \ref{sctn:2} is devoted to the
proof of Theorem \ref{thrm}, while concluding remarks and questions can
be found in Section \ref{sctn:3}.

\section{Countable collections} \label{sctn:1}
Denote the set of self-adjoint and of positive elements of a \cstar-algebra $A$
by $A_{sa}$ and $A_+$, respectively. Given a compact Hausdorff space $X$, $C(X)$ is the
\cstar-algebra of the continuous functions from $X$ into $\C$.

In \cite[Lemma $5.34$]{setoperator} Farah and Wofsey
prove that any countable set of commuting projections in the Calkin algebra can
be lifted to a set of simultaneously diagonalizable projections in $B(H)$.
The thesis of the following proposition is weaker, but it holds in a more general
context.
\begin{proposition}
Let $\phi:A \to B$ be a surjective $*$-homomorphism between two \cstar-algebras and
let $\set{p_n}_{n\in \N}$ be a collection of commuting projections of $B$.
Then there exists a set $\set{q_n}_{n\in \N}$ of commuting positive elements of $A$
such that $\phi(q_n)=p_n$.
\end{proposition}
\begin{proof}
We can assume that both $A$ and $B$ are unital, that $\phi(1_A)=1_B$ and that
$1_B \in \set{p_n}_{n\in \N}$.
Let $C\subseteq B$ be the abelian \cstar-algebra generated by the set
$\set{p_n}_{n\in \N}$. Consider the element
\[
b=\sum_{n\in \N} \frac{2p_n -1}{3^n}.
\]
Let $X$ be the spectrum of $b$ in $A$. The algebra $C$ is generated by $b$
(see \cite[p. 293]{rickart} for a proof), thus $C\cong C(X)$.
Fix $a \in A$ such that $\phi(a) = b$. The element $(a +a^*)/2$ is still in the preimage of $b$
since $b$ is self-adjoint, thus we can assume $a \in A_{sa}$.
If $Y$ is the spectrum of $a$, we have in general
that $X\subseteq Y$. Fix $f_n \in C(X)_+$ such that $f_n(b)=p_n$. Since
the range of $f_n$ is contained in $[0,1]$ and the spaces $Y$ and $X$ are compact and Hausdorff,
by the Tietze Extension Theorem (\cite[Theorem 15.8]{will}), for every $n \in \N$,
there is a continuous $F_n: Y \to [0,1]$
such that $F_n\restriction_X=f_n$. Set $q_n=F_n(a)$. The map $\phi$ acts on $C(Y)$ as
the restriction on $X$ (here we identify $\mathrm{C}^\ast(a)$
and $\mathrm{C}^\ast(b)$ with $C(Y)$ and $C(X)$ respectively), therefore $\phi(q_n)
= p_n$ for every $n \in \N$.
\end{proof}
The $q_n$'s can be chosen to be projections if there is a self-adjoint $a$ in the preimage of $b$
whose spectrum is $X$. By the Weyl-von Nuemann theorem,
this is the case when $\phi$ is the quotient map from $B(H)$ onto the
Calkin algebra (see \cite[Theorem II.4.4]{kend}).

We focus now on lifting sets of positive orthogonal elements, starting with
a set of size two.
Let therefore $\phi:A \to B$ be a surjective $*$-homomorphism
of \cstar-algebras, and let $b_1,b_2\in B_+$ be such that $b_1b_2=0$.
Consider the self-adjoint $b=b_1-b_2$
and let $a\in A_{sa}$ be such that $\phi(a)=b$. The positive and the negative part of $a$ are
two orthogonal positive elements of $A$ such that $\phi(a_+)=b_1$, $\phi(a_-)=b_2$.
The situation is analogous when dealing
with countable collections, as shown in Lemma $10.1.12$ of \cite{loring}. 
\begin{proposition}[{\cite[Lemma 10.1.12]{loring}}] 	\label{prop}
Assume $\phi:A \to B$ is a surjective $*$-homomorphism between
two \cstar-algebras.
Let $\set{b_n}_{n\in \N}$ be a collection of orthogonal positive elements in $B$. Then there exists
a set $\set{a_n}_{n\in \N}$ of orthogonal positive elements in $A$ such that $\phi(a_n)=b_n$.
\end{proposition}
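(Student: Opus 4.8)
The plan is to bootstrap from the two-element case (already sketched in the excerpt) to a countable family by a diagonal/inductive construction, taking care that the orthogonality relations among the lifts are preserved at every stage. The key difficulty is that lifting $b_1 - b_2$ and taking positive/negative parts handles two elements at once, but when we add a third orthogonal element $b_3$ we cannot simply lift $b_3$ independently: the resulting lift need not be orthogonal to the lifts $a_1, a_2$ we already chose. So the construction must be set up so that each new lift lives, up to a small perturbation, inside a hereditary subalgebra that is already orthogonal to everything chosen before.

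First I would fix, for each $n$, a positive contraction $e_n \in B_+$ acting as a unit on $b_n$ (e.g. $e_n = g(b_n)$ for a continuous $g$ with $g \equiv 1$ on a neighborhood of the spectrum of $b_n$ away from $0$), so that $e_n b_n = b_n$ and $e_n b_m = 0$ for $m \neq n$ by orthogonality. The functional calculus lets me approximate each $b_n$ by $b_n^{(k)} := h_k(b_n)$ where $h_k$ vanishes on $[0, 1/k]$, with $b_n^{(k)} \to b_n$; each $b_n^{(k)}$ is "properly supported" in the sense that there is a positive contraction $c_n^{(k)}$ with $c_n^{(k)} b_n^{(k)} = b_n^{(k)}$ and $c_n^{(k)} c_m^{(k)} = 0$ for $m \neq n$ whenever these come from orthogonal elements. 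The point of passing to these truncations is to get genuine orthogonality of supporting elements, not merely $b_n b_m = 0$, which is what one needs to run the induction cleanly.

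Next, I would build the $a_n$ inductively together with auxiliary positive contractions $d_n \in A_+$ such that: (i) $\phi(a_n) = b_n$; (ii) the $a_n$ are pairwise orthogonal; (iii) $d_n a_n = a_n$ and $d_n d_m = 0$ for $m \neq n$; and (iv) $\phi(d_n)$ is an approximate unit element for $b_n$ orthogonal to the other $b_m$. At stage $n$, having chosen $a_1, \dots, a_{n-1}$ and $d_1, \dots, d_{n-1}$, set $D_{n-1} = \sum_{i<n} d_i$, a positive contraction, and work in the hereditary subalgebra $A_n := \overline{(1 - D_{n-1}) A (1 - D_{n-1})}$ (unitize $A$ if necessary); since $\phi(D_{n-1})$ is orthogonal to $b_n$, the element $b_n$ lies in $\phi(A_n)$, and $\phi$ restricts to a surjection $A_n \to \overline{(1-\phi(D_{n-1}))B(1-\phi(D_{n-1}))}$. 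Now lift $b_n$ (together with a supporting contraction for it) inside $A_n$ using the two-element argument — or just a direct positive lift followed by multiplying on both sides by a supporting element — to obtain $a_n, d_n \in (A_n)_+$. By construction $a_n \perp a_i$ for all $i < n$ because $a_n \in A_n$ and $a_i = d_i a_i d_i$ with $d_i \le D_{n-1}$, and $(1-D_{n-1}) d_i = 0$.

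The main obstacle I anticipate is exactly step (iii)/(iv): arranging that the supporting contractions $d_n$ are themselves pairwise orthogonal in $A$ (not just that the $a_n$ are), since that orthogonality of the $d_n$ is what makes $D_{n-1}$ behave like a projection-sum and keeps the hereditary subalgebras nested correctly. This is handled by choosing $d_n$ of the form $\psi(a_n)$ for a suitable continuous function $\psi$ with $\psi \equiv 1$ near the top of the spectrum of $a_n$ and $\psi(0)=0$, after first having perturbed $b_n$ to a truncation $b_n^{(k_n)}$ with $k_n$ large enough that the error $\|b_n - b_n^{(k_n)}\|$ is summable; the final $a_n$ is then recovered as a limit, or one simply accepts the truncated lifts, which already witness the statement since the original $b_n$ can be replaced by any sequence of orthogonal positive elements with the same (or isomorphic) closed ideal/hereditary structure — but the cleanest route is to lift the actual $b_n$ by noting $b_n = b_n^{1/2} b_n^{1/2}$ and that $\phi$ being surjective lets us lift $b_n^{1/2}$ to a positive element $x_n \in A_n$, then set $a_n = x_n^* x_n$ after cutting down by a supporting element. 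I would close by remarking that no countability is essential to a single inductive step, but the construction of the *single* supporting system $\{d_n\}$ breaks down at $\aleph_1$, which is precisely the phenomenon Theorem \ref{thrm} exploits.
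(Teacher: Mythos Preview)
Your architecture---an induction through nested hereditary subalgebras---is the right one, but the particular inductive hypothesis you set up cannot be maintained, and the patch you propose does not close the gap.

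The trouble is with the auxiliary elements $d_n$. You require $d_n a_n = a_n$ with $d_n$ a positive contraction, and you propose to obtain $d_n$ as $\psi(a_n)$ for a continuous $\psi$ with $\psi(0)=0$ and $\psi\equiv 1$ near the top of the spectrum. But ``$d_n a_n = a_n$'' forces $\psi\equiv 1$ on all of $\sigma(a_n)\setminus\{0\}$, which is incompatible with $\psi(0)=0$ unless $a_n$ has a spectral gap at $0$; a generic positive lift will not have one. Passing to truncations $b_n^{(k)}$ only produces lifts of the truncations, not of the original $b_n$, and the limit argument you allude to is not supplied. There is a second slip: you write ``$(1-D_{n-1})d_i=0$'', but for pairwise orthogonal contractions $d_i$ one has $D_{n-1}d_i=d_i^2$, so $(1-D_{n-1})d_i=d_i-d_i^2$, which vanishes only when $d_i$ is a projection. (The conclusion $a_i\perp A_n$ is still correct, since $a_i(1-D_{n-1})=a_i-a_i d_i=0$ from the local-unit condition; but you do \emph{not} get $d_n\perp d_i$ for free from $d_n\in A_n$, so the orthogonality of the $d$'s is not self-propagating either.) Finally, the ``two-element argument'' lifts \emph{orthogonal} pairs; it does not lift an element together with a local unit for it, so invoking it for the pair $(b_n,\text{support of }b_n)$ is a category error.

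The paper's proof keeps your nested-hereditary-subalgebra idea but flips the role of the auxiliary element so that no local units are ever needed. Set $c_j=\sum_{i\ge j}2^{-i}b_i\in B_+$; then $b_j\perp c_{j+1}$ and every $b_i$ with $i\ge j$ lies in $C_j:=\overline{c_j B c_j}$. Lift the orthogonal pair $(b_1,c_2)$ to an orthogonal pair $(a_1,d_2)$ in $A_+$ using the two-element case, and let $D_2$ be the hereditary subalgebra generated by $d_2$. Then $a_1$ is orthogonal to \emph{all} of $D_2$ (no spectral gap needed: orthogonality to a positive generator propagates to the whole hereditary subalgebra it generates), and $\phi(D_2)=C_2\ni b_2,c_3$. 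Now repeat inside $D_2$ with the orthogonal pair $(b_2,c_3)$, and continue. The single element $c_{j+1}$ carries all future $b_i$'s at once, which is exactly the missing idea in your scheme.
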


\section{Uncountable collections} \label{sctn:2}
Throughout this section, let $A$ be a primitive, non-unital,
$\sigma$-unital \cstar-algebra. We can thus assume that $A$ is a non-commutative
strongly dense \cstar-subalgebra of $B(H)$ for a certain Hilbert space $H$.
A sequence of operators $\set{x_n}_{n\in \N}$ \emph{strictly converges} to $x\in B(H)$ if and only if
$x_n a \to xa$ and $ a x_n \to a x$ in norm for all $a \in A$.
In this scenario $M(A)$ can be identified with the idealizer
\[
\set{x \in B(H) : xA\subseteq A , Ax\subseteq A}
\]
or with the strict closure of $A$ in $B(H)$.
Given two elements $a,b$ in a \cstar-algebra $A$, we denote the commutator
$ab - ba$ by $[a,b]$.
From now on, let $(e_n)_{n\in \N}$ be an approximate unit of $A$
such that:
\begin{enumerate}
\item $e_0 = 0$;
\item $\lVert e_i - e_j \rVert =1$ for $i\not= j$;
\item $e_i e_j=e_i$ for every $i<j$.
\end{enumerate}
Such an approximate unit exists since $A$ is $\sigma$-unital,
as proved in Section $2$ of \cite{pedersencorona}.

The proof of Theorem \ref{thrm} follows closely
the one given by Bice and Koszmider for \cite[Theorem 7]{bice}, and a lemma
similar to \cite[Lemma 6]{bice} is required.

\begin{lemma} \label{lemma}
Let $A$ be a primitive, non-unital, $\sigma$-unital \cstar-algebra.
There exists a family $(a_{\beta})_{\beta \in \aleph_1}
\subseteq M(A)_+\setminus A$
such that:
\begin{enumerate}
\item $\lVert a_{\beta} \rVert = 1$ for all $\beta \in \aleph_1$;
\item $a_{\alpha} a_{\beta} \in A$ for all distinct $\alpha ,\beta \in \aleph_1$;
\item given $d_1, d_2 \in M(A)$, for all $\beta \in \aleph_1$, all $n \in \N$,
and all but finitely many
$\alpha < \beta$:
\[
\lVert 
[(a_{\alpha} + d_1e_n),(a_{\beta} +d_2e_n)] \rVert \ge\frac{1}{8}.
\]
\end{enumerate}
\end{lemma}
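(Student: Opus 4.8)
The plan is to build the family $(a_\beta)_{\beta\in\aleph_1}$ by transfinite recursion, mimicking a Luzin-type gap construction inside $M(A)$. Since $A$ is a strongly dense subalgebra of $B(H)$ with $H$ necessarily infinite-dimensional (as $A$ is non-unital), I would first fix an orthonormal sequence $(\xi_n)_{n\in\N}$ in $H$ adapted to the approximate unit $(e_n)$, so that each $e_n$ roughly acts as a projection onto $\mathrm{span}\{\xi_0,\dots,\xi_{k(n)}\}$ up to small error; more precisely I would choose an increasing sequence of finite-rank projections $p_n$ with $p_n\in A$ possible after perturbation, or work directly with the spectral projections $\chi_{(1/2,1]}(e_n)$ and exploit $e_ie_j=e_i$ for $i<j$ to get a genuine increasing sequence of projections whose supremum is $1_{M(A)}$ strictly. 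The building blocks of the $a_\beta$ will be operators supported on these "intervals": for an infinite subset $S\subseteq\N$ I would form a positive contraction $a_S\in M(A)$ which, on the $n$-th interval, is either (close to) a rank-one projection onto a carefully rotated vector $\eta_n$ when $n\in S$, and $0$ when $n\notin S$. The strict convergence of the partial sums gives $a_S\in M(A)$, and $a_S\notin A$ precisely because $S$ is infinite (an element of $A$ is a norm-limit of elements eventually absorbed by the $e_n$, hence "vanishes at infinity" along the intervals).

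With this setup, conditions (1) and (2) reduce to combinatorics of the index sets: $\|a_S\|\le 1$ is automatic, and $a_{S}a_{T}\in A$ will hold whenever $S\cap T$ is finite, since then the product is supported on finitely many intervals and is therefore in $A$ (each interval piece lies in $p_nAp_n\subseteq A$). So I would demand that the chosen family $(S_\beta)_{\beta\in\aleph_1}$ of subsets of $\N$ be almost disjoint. Condition (3) is the real content and is where the rotated vectors $\eta_n$ enter: the point is that two rank-one projections onto vectors at a fixed positive angle have commutator bounded below by an absolute constant (here $1/8$ after accounting for all the approximation errors). On each interval $n$ lying in $S_\alpha\cap S_\beta^{c}$ or similar asymmetric configurations one does not get noncommutativity for free, so I would instead arrange, for each pair $\alpha<\beta$, that the construction forces infinitely many intervals $n$ on which $a_\alpha$ looks like the projection onto $\eta_n$, $a_\beta$ looks like the projection onto a vector $\eta'_n$ at angle bounded away from $0$ and $\pi/2$ from $\eta_n$, and simultaneously $e_n$ has already stabilized to $1$ on that interval so that $d_1e_n,d_2e_n$ act on the interval as fixed (interval-independent up to the strict-topology error) perturbations $d_1,d_2$. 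A compactness/pigeonhole argument over the finite-dimensional interval then yields that for all but finitely many such $n$ the perturbed commutator still has norm $\ge 1/8$: one shows that if it were small for infinitely many $n$, a subsequential limit would give two commuting operators obtained as limits of the rotated projections, contradicting the fixed angle. The recursion step is: given $(a_\xi)_{\xi<\beta}$, since $\beta$ is countable, enumerate it as $\{\xi_k:k\in\N\}$ and build $S_\beta$ by a diagonal argument so that for each $k$, $S_\beta$ meets "cofinitely often" the intervals where the angle condition against $a_{\xi_k}$ can be enforced — this is exactly the Hausdorff/Luzin trick of threading a new set through countably many previously built ones while keeping almost-disjointness and a genericity property.

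The main obstacle I expect is making step (3) robust under the two independent sources of perturbation: the $d_ie_n$ terms (which are unbounded in "how much of $d_i$ they reveal" as $n$ grows, but which stabilize strictly) and the multiplier-tail errors inherent in assembling $a_\beta$ from interval pieces. The cleanest way around this is to phase the construction so that on the intervals used to certify the commutator estimate against $a_{\xi_k}$, the error terms are all below a fixed threshold $\varepsilon$ with $8\varepsilon$ much smaller than the gap between the "ideal" commutator norm (which I would make $\ge 1/4$, say, by choosing the angle to be $\pi/4$) and $1/8$; since $d_1,d_2$ are fixed but arbitrary, the quantifier order in (3) ("for all $d_1,d_2$, all $\beta$, all $n$, all but finitely many $\alpha$") means the finite exceptional set is allowed to depend on everything, so one only needs, for each fixed $d_1,d_2,n$, that the commutator estimate survives for cofinitely many of the certifying intervals — and this follows from the stabilization of $e_n$ together with the finite-dimensional compactness argument above. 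I would also need to double-check that $a_\beta\notin A$ is preserved (it is, as $S_\beta$ is infinite) and that the family remains inside $M(A)_+\setminus A$ of norm $\le 1$ at every stage, both of which are built into the choice of interval pieces.
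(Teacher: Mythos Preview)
Your overall strategy---transfinite recursion of Luzin type, building each $a_\beta$ from pieces supported on ``intervals'' of the approximate unit, and enforcing a $\pi/4$ angle between test vectors---is indeed the paper's strategy. But two of your key mechanisms are off. First, there is a direct contradiction between your almost-disjointness condition for item (2) and your requirement for item (3) that there be \emph{infinitely many} intervals on which $a_\alpha$ and $a_\beta$ both act as projections onto vectors at a nontrivial angle: on any such interval the product $a_\alpha a_\beta$ is nonzero, so if there are infinitely many of them then $a_\alpha a_\beta\notin A$. The paper resolves this tension by giving each $a_\beta$ its own interval scheme via an increasing function $f_\beta:\N\to\N$, and arranging that for each fixed $\alpha<\beta$ only \emph{finitely many} $\beta$-intervals overlap an $\alpha$-interval---one deliberately placed overlap for each relevant index $i$ of the approximate unit---while all later $\beta$-intervals are far from all $\alpha$-intervals. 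Those finitely many overlaps suffice for (3), because the overlap that certifies the commutator bound against $e_i$ is placed beyond index $i$.

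Second, your handling of the perturbations $d_j e_n$ is inverted. On an interval contained in $(e_M-e_m)H$ with $m>n$ one has $e_n(e_M-e_m)=e_n-e_n=0$, so $e_n$ (and hence $d_j e_n$ for every $d_j\in M(A)$) annihilates the test vector and its relevant images; it does \emph{not} stabilise to $1$ there. Thus the perturbed commutator, evaluated on that vector, equals the unperturbed one exactly, and the lower bound is immediate and uniform in $d_1,d_2$---uniformity that is actually needed, since in the deduction of the theorem the perturbations vary with $\alpha$ and $\beta$. Your compactness argument is therefore both unnecessary and, as stated, unworkable: if $e_n$ really acted as $1$ on the interval then $d_j e_n$ would act as $d_j$, and since $d_1,d_2\in M(A)$ are arbitrary there is no finite-dimensional limiting object to extract. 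A further gap is that $A$ need not contain any finite-rank operators, so rank-one projections are generally unavailable; the paper produces its interval pieces inside $A$ by applying Kadison's Transitivity Theorem to the hereditary subalgebras $\overline{(e_M-e_m)A(e_M-e_m)}$, obtaining positive elements of $A$ (not projections) that send one prescribed unit vector to itself and an orthogonal one to $0$.
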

The rough idea to prove this lemma is
to build, for every $\beta < \aleph_1$, a strictly increasing function
$f_{\beta}: \N \to \N$ and a norm-bounded sequence
$\set{c^\beta_k}_{k \in \N}\subseteq A_+$ to define
\[
a_{\beta}= \sum_{k\in \N} (e_{f_{\beta}(2k+1)}-e_{f_{\beta}(2k)})^{\frac{1}{2}}
c^{\beta}_k (e_{f_{\beta}(2k+1)}-e_{f_{\beta}(2k)})^{\frac{1}{2}}.
\]
Note that this series belongs to $M(A)$ by Theorem $4.1$ in 
\cite{pedersencorona} (see also \cite[Item (10) p.48]{satur}). In order to satisfy the thesis of the lemma, we will build each $c^\beta_k$
so that, for some $\alpha < \beta$ and some $n\in \N$, the following holds
\[
\lVert 
[(a_{\alpha} + e_n),(c^{\beta}_k +e_n)]
 \rVert \ge \frac{1}{8}.
\]
The choice of $f_{\beta}$ will guarantee orthogonality in $Q(A)$
exploiting, for $n_2<n_1<m_2<m_1$, the following fact:
\[
(e_{m_1}-e_{m_2})(e_{n_1}-e_{n_2})=0.
\]
The main ingredient used to build $c^{\beta}_k$ is Kadison's Transitivity Theorem,
which we are allowed to use since $A$ is primitive.

\begin{proof}[Proof of Lemma \ref{lemma}]
Since the \cstar-algebra $A$ is primitive, we can assume that there is a Hilbert space
$H$ such that $A \subseteq B(H)$ and $A$ acts irreducibly on $H$.
For each $n<m$, denote the space $\overline{(e_m-e_n)H}$ by $S_{n,m}$.
We start by building $a_0$. Let $f:\N \to \N$ be defined as follows:
\[
f(n)= 
 \begin{cases}
 	 2^{n+1}-1 & \text{ if }n \text{ is even}\\
      2^n & \text{ if }n \text{ is odd.}
\end{cases} 
\]
For every $k\in \N$ there is a unit
vector $\xi$ in the range of $e_{f(2k+1)} - e_{f(2k)}$. By the definition of the approximate unit
$(e_n)_{n \in \N}$, the vector $\xi$ is a 1-eigenvector of $e_{f(2k+2)}$.
This, along with the (algebraic) irreducibility of $A \subseteq B(H)$, entails that
\[
A S_{f(2k+1),f(2k)}=H.
\]
Denote the algebra $\overline{(e_{f(2k+1)} - e_{f(2k)})A(e_{f(2k+1)} - e_{f(2k)})}$ by $A_k$.
We have that
\[
A_k H\supseteq S_{f(2k),f(2k+1)}.
\]
Let $\xi^0_k,\eta^0_k \in S_{f(2k),f(2k+1)}$ be two orthogonal\footnote{
We can always assume $S_{n,n+1}$ has at least $2$ linearly independent vectors
for each $n\in \N$ by taking, if necessary, a subsequence $(e_{k_j})_{j\in \N}$ from the original
approximate unit.} norm one vectors.
Since $A$ acts irreducibly on $H$ and $A_k$ is a hereditary subalgebra of $A$, it follows that
$A_k$ acts irreducibly on $B(A_kH)$
(see \cite[Theorem 5.5.2]{murphy}). Therefore, by Kadison's Transitivity Theorem,
we can find a self-adjoint $c^0_k \in A_k$ such that
\[
c^0_k(\xi^0_k)=\xi^0_k,
\]
\[
c^0_k(\eta^0_k)=0,
\]
and
$ \lVert c^0_k \rVert = 1$. We can suppose that $c^0_k$ is positive by taking its square,
doing so will not change its norm nor the image of $\xi^0_k$ and $\eta^0_k$.
Consider the function
\[
f_0(n)= 
 \begin{cases}
  f(n)-1 & \text{ if }n \text{ is even} \\
      f(n)+1 & \text{ if }n \text{ is odd.} 
     
\end{cases} 
\]
We have that
\[
e_{f_0(2k+1)}c^0_k=c^0_ke_{f_0(2k+1)}=c^0_k,
\]
\[
e_{f_0(2k)} c^0_k = c^0_k e_{f_0(2k)}=0.
\]
This entails
\[
(e_{f_0(2k+1)}-e_{f_0(2k)}) c^0_k= c^0_k = c^0_k
(e_{f_0(2k+1)}-e_{f_0(2k)})
\]
and therefore also
\[
c^0_k=(e_{f_0(2k+1)}-e_{f_0(2k)})^{1/2} c^0_k
(e_{f_0(2k+1)}-e_{f_0(2k)})^{1/2}.
\]
The norm $\lVert c^0_k \rVert$ is bounded by 1 for every $k \in \N$, therefore the sum
\[
a_0=\sum_{k\in \N} c^0_k= \sum_{k\in \N} (e_{f_0(2k+1)}-e_{f_0(2k)})^{1/2} c^0_k
(e_{f_0(2k+1)}-e_{f_0(2k)})^{1/2}
\]
is strictly convergent (see \cite[Theorem 4.1]{pedersencorona} or \cite[Item (10) p.48]{satur}),
hence $a_0\in M(A)_+$. Furthermore:
\begin{align*}
\lVert a_0 \rVert &= \lVert \sum_{k\in \N} (e_{f_0(2k+1)}-e_{f_0(2k)})^{1/2} c^0_k
(e_{f_0(2k+1)}-e_{f_0(2k)})^{1/2} \rVert \le \\
&\le  \lVert \sum_{k\in \N} e_{f_0(2k+1)}-e_{f_0(2k)} \rVert \le 1.
\end{align*}
In order to show that $a_0 \notin A$,
first observe that
\[
a_0(\xi^0_k)= \sum_{m< k} c^0_m(\xi^0_k) + c^0_k(\xi^0_k) +
\sum_{m>k}c^0_m (\xi^0_k)=c^0_k(\xi^0_k) = \xi^0_k.
\]
The first sum annihilates since $\xi^0_k \in S_{f(2k),f(2k+1)}$ implies
$\xi^0_k = (e_{f_0(2k+1)} - e_{f_0(2k)})(\xi^0_k)$, and for $m<k$
\[
c^0_m(e_{f_0(2k+1)} - e_{f_0(2k)})(\xi^0_k)=
c^0_m e_{f_0(2m+1)}(e_{f_0(2k+1)} - e_{f_0(2k)})(\xi^0_k)=0,
\]
which follows by $f_0(2m+1)<f_0(2k)<f_0(2k+1)$.
The second series also annihilates, indeed for $m>k$
we have $c^0_me_{f_0(2k+1)}= c^0_m e_{f_0(2m)}e_{f_0(2k+1)}=0$
(the same equation also holds for $e_{f_0(2k)}$). Using the same argument, it can
be proved that
\[
a_0(\xi)=c^0_n(\xi)
\]
for every $\xi \in S_{f_0(2n),f_0(2n+1)}$.
Observe that $\lVert (a_0 - e_{f_0(2m+1)}a_0)(\xi^0_k) \rVert =1$ for $k>m$, thus
$a_0 \notin A$.

The construction proceeds by transfinite induction on $\aleph_1$, the first uncountable cardinal.
At step $\beta < \aleph_1$ we assume to have a sequence of elements
$(a_\alpha)_{\alpha<\beta}$ in $M(A)_+$ and functions $(f_\alpha)_{\alpha < \beta}$ such that:
\begin{enumerate}[(i)]
\item \label{clause1} For all $\alpha< \beta$ the function $f_\alpha :\N \to \N$ is strictly increasing and,
given any other $\gamma<\alpha$, for all $k\in \N$ there exists $N \in \N$ such that
for all $j>N$ and all $i \in \N$ the following holds
\[
\lvert f_\alpha(j)- f_\gamma(i) \rvert > 2^k.
\]
Furthermore, we ask that for all $\alpha < \beta$ and all $k\in \N$:
\[
f_\alpha(2(k+1)) - f_\alpha(2k +1) > 2^{2k+1}.
\]
\item \label{clause2} For each $\alpha < \beta$ there exists a sequence $(c^\alpha_k )_{k \in \N}$
of positive norm 1 elements in $A$ such that
\[
a_\alpha = \sum_{k\in \N} c^\alpha_k .
\]
Moreover we require that
\[
e_{f_\alpha(2k+1)}c^\alpha_k=c^\alpha_ke_{f_\alpha(2k+1)}=c^\alpha_k,
\]
\[
e_{f_\alpha(2k)} c^\alpha_k = c^\alpha_k e_{f_\alpha(2k)}=0,
\]
and that there exist $\xi^\alpha_k , \eta^\alpha_k \in S_{f_\alpha(2k),f_\alpha(2k+1)}$, two norm one orthogonal vectors, such that $c^\alpha_k(\xi^\alpha_k)=\xi^\alpha_k$ and $c^\alpha_k(\eta^\alpha_k)=0$.
\item \label{clause3} Given $\alpha < \beta$ and $d_1, d_2 \in M(A)$, for all $l\in \N$,
and for all but possibly $l$ many $\gamma<\alpha$ the following holds:
\[
\lVert [(a_\alpha + d_1e_l),(a_\gamma + d_2e_l)] \rVert \ge \frac{1}{2}.
\]
\end{enumerate}
It can be shown, as we already did for $a_0$, that for all $\alpha < \beta$:
\begin{enumerate}[a.]
\item $a_\alpha\in M(A)_+ \setminus A$;
\item $\lVert a_\alpha  \rVert = 1$;
\item \label{itema}$a_\alpha(\xi)=c^\alpha_k(\xi) \in S_{f_\alpha(2k),f_\alpha(2k+1)}$ for every
$\xi \in S_{f_\alpha(2k),f_\alpha(2k+1)}$.
\end{enumerate}
Moreover, by items (\ref{clause1})--(\ref{clause2}), along with the fact that for $n_2<n_1<m_2<m_1$
\[
(e_{m_1}-e_{m_2})(e_{n_1}-e_{n_2})=0,
\]
we have that $a_\alpha a_\gamma \in A$ for all $\alpha, \gamma < \beta$.

We want to find $f_{\beta}$ and $a_{\beta}$ such that the families $\set{a_\alpha}_{\alpha< \beta +1}$
and $\set{f_{\alpha}}_{\alpha < \beta +1}$ satisfy the three inductive hypotheses.
This will be sufficient to continue the induction and to obtain
the thesis of the lemma. Since $\beta$ is a countable ordinal, the sequence
$(a_\alpha)_{\alpha<\beta}$ is either finite or can be written as $(a_{\alpha_n})_{n<\N}$,
where $n \mapsto \alpha_n$ is a bijection between $\N$ and $\beta$. We assume that
$\beta$ is infinite, since the finite case is easier. In order to ease the notation, we shall
denote $a_{\alpha_n}$ by $a_n$ (and similarly $f_{\alpha_n}$ by $f_n$,
$c^k_{\alpha_n}$ by $c^k_n$, etc.).

The construction of $a_\beta$ proceeds inductively on the set $\set{(i,j) \in \N \times \N : i\le j}$
ordered along with any well-ordering of type $\omega$ such that $(i,j) \le (i',j')$ implies $j \le j'$, like
for example
\[
(i,j) \le (i',j') \iff j \le j' \ \text{or} \ j = j', i \le i'.
\]
Suppose we are at step $M$, which corresponds to
a certain couple $(i,j)$. At step $M$ we
provide a $c^{\beta}_{M}\in A_+$ such that, for every $d_1, d_2 \in M(A)$
\[
\lVert 
[(a_j + d_1e_i),(c^{\beta}_{M} + d_2e_i)]
 \rVert \ge \frac{1}{2}
\]
and we define two values of $f_{\beta}$. Assume that $f_{\beta}(n)$
has been defined for $n \le 2M-1$. Let $m\in \N$ be the smallest natural number such that
\[
f_j(2m)>\max\left\{ i+2,f_{\beta}(2M-1)+2^{2M-1}+1 \right\}
\]
and such that, for $l\ge 2m$, the inequality $\lvert f_j(l) - f_k(n) \rvert > 2^{M} +1$
holds for all $k \in \N$ such that $\alpha_k < \alpha_j$, and all $n\in \N$.
By inductive hypothesis
there are two norm one orthogonal vectors $\xi^j_m,\eta^j_m \in
S_{f_j(2m),f_j(2m+1)}$ such that $c^j_m(\xi^j_m)=\xi^j_m$ and $c^j_m(\eta^j_m)=0$.
Set $\xi^\beta_{M}=\frac{1}{\sqrt{2}} (\xi_j^m + \eta_j^m)$ and $\eta^\beta_{M}=
\frac{1}{\sqrt{2}} (\xi_j^m - \eta_j^m)$.
Using Kadison's Transitivity Theorem, fix a positive, norm one element
\[c^{\beta}_{M} \in 
\overline{(e_{f_j(2m+1)}-e_{f_j(2m)})A(e_{f_j(2m+1)}-e_{f_j(2m)})}
\]
such that
\[
c^{\beta}_{M}(\xi^\beta_{M})= \xi^\beta_{M},
\]
\[
c^{\beta}_{M}(\eta^\beta_{M})=0.
\]
Let $f_{\beta}(2M) = f_j(2m)-1$ and $f_{\beta}(2M+1)=
f_j(2m+1)+1$. We have therefore that
\[
e_{f_{\beta}(2M+1)}
c^{\beta}_{M}e_{f_{\beta}(2M+1)}= c^{\beta}_{M},
\]
\[
e_{f_{\beta}(2M)} c^{\beta}_{M} = c^{\beta}_{M} e_{f_{\beta}(2M)}= 0.
\]
Moreover:
\[
\tag{$*$}
\lVert (a_j + d_1e_i)(c^{\beta}_{M} +d_2e_i)(\xi^\beta_{M})-
(c^{\beta}_{M} +d_2e_i)(a_j + d_1e_i)(\xi^\beta_{M}) \rVert =
\]
\[
\lVert a_j c^{\beta}_{M} (\xi^\beta_{M}) - c^{\beta}_{M} a_j (\xi^\beta_{M}) \rVert =
\frac{1}{2\sqrt{2}} \lVert  \xi_j^m - \eta_j^m \rVert  =
\frac{1}{2}.
\]
This is the case since $e_i(\xi)=0$
for every $\xi \in S_{f_j(2m),f_j(2m+1)}$ (we chose $m$ so that $f_j(2m) > i+2$) and
$c^{\beta}_{M} (\xi^\beta_{M}), a_j(\xi^\beta_{M}) = c^j_m (\xi^\beta_{M})
\in S_{f_j(2m),f_j(2m+1)}$.
Define
\[
a_{\beta} = \sum_{n \in \N} c^{\beta}_n =
\sum_{n \in \N} (e_{f_{\beta}(2n+1)}-e_{f_{\beta}(2n)})^{\frac{1}{2}} c^{\beta}_n
(e_{f_{\beta}(2n+1)}-e_{f_{\beta}(2n)})^{\frac{1}{2}}.
\]
This series is strictly convergent since all $c^\beta_n$'s have norm 1.
The families $\set{f_n}_{n < \N} \cup \set{f_\beta}$ and $\set{a_n}_{n < \N} \cup \set{a_\beta}$
satisfy items \eqref{clause1}--\eqref{clause2}
of the inductive hypothesis\footnote{The induction to define $a_\beta$ and $f_\beta$ is on the set
$\set{(i,j) \in \N \times \N : i\le j}$
ordered with a well-ordering of type $\omega$ such that $(i,j) \le (i',j')$ implies $j \le j'$.
This is used to show that $f_\beta$ satisfies clause \eqref{clause1} of the inductive hypothesis.}.

Finally we verify clause \eqref{clause3}.
Notice that, by construction, for every $k \in \N$, given $\xi \in S_{f_\beta(2k),f_\beta(2k+1)}$
we have
\[
a_\beta(\xi) = c^\beta_k(\xi).
\]
Let $i\le j \in \N$, denote the step corresponding to the couple $(i,j)$ by $M$,
and let $m\in \N$ be such that $f_{\beta}(2M)=f_j(2m)-1$
(by construction we can find such $m$).
Remember that $\xi^\beta_{M}=\frac{1}{\sqrt{2}} (\xi_j^m + \eta_j^m)
\in S_{f_{\beta}(2M),f_{\beta}(2M+1)}$. Given $d_1, d_2 \in M(A)$, we have that
\[
\lVert (a_j + d_1e_i)(a_{\beta} +d_2e_i)(\xi^\beta_{M})-
(a_{\beta} +d_2e_i)(a_j + d_1e_i)(\xi^\beta_{M}) \rVert =
\]
\[
\lVert a_j a_\beta(\xi^\beta_M) - a_\beta a_j(\xi^\beta_M) \rVert
= \frac{1}{2\sqrt{2}} \lVert  \xi_j^m - \eta_j^m \rVert  =
\frac{1}{2}.
\]
This equation can be shown using the same arguments used to prove $(*)$.

Notice that if $\beta$ is finite, we only obtain a finite number of $c^{\beta}_n$, therefore
their sum (which is finite) does not belong to $M(A) \setminus A$.
In this case it is sufficient to add an infinite number of addends, as
we did for $a_0$. Suppose that $\beta$ is (the ordinal corresponding to) $N \in \N$,
then the previous construction defines $f_N$ only up until $2N +1$.
Let $f_N(2(N+1))$ be the smallest integer such that
\begin{itemize}
\item $f_N(2(N+1)) - f_N(2N +1) > 2^{2N+1}$;
\item $\lvert f_N(2(N+1)) - f_j(n) \rvert > 2^{2(N+1)}$ for all $j<N$;
and for all $n\in \N$.
\end{itemize}
Define $f_N(2(N+1)+1)= f_N(2(N+1))+3$ and continue
inductively the definition of $f_N$.
For each $n>N$ we can therefore,
as we did for $a_0$ using Kadison's Transitivity Theorem, find a positive element
\[
c^N_n\in \overline{
(e_{f_N(2n+1)-1}-e_{f_N(2n)+1}) A (e_{f_N(2n+1)-1}-e_{f_N(2n)+1})}
\]
which moves a norm one vector $\xi^N_n \in S_{f_N(2n),f_N(2n+1)}$
into itself, and another orthogonal norm one vector $\eta^N_n$ to zero.
If we define $a_N$ to be the sum of such $c^N_n$'s, it
is possible to show, using the same arguments
exposed when $\beta$ was assumed to be infinite, that the families $\set{f_n}_{n < \N} \cup \set{f_\beta}$ and
$\set{a_n}_{n<N+1}$ satisfy
items \eqref{clause1}--\eqref{clause3} of the inductive hypothesis.
\end{proof}

The proof of Theorem \ref{thrm} is analogous to the one given in Theorem $7$ of \cite{bice}, but it uses our Lemma \ref{lemma} instead of \cite[Lemma 6]{bice}.

\begin{proof}[Proof of Theorem \ref{thrm}]
Let $(e_n)_{n\in \N} \subseteq A$ be the approximate unit defined at the beginning of the current section,
and let $(a_{\beta})_{\beta \in \aleph_1}$
be the $\aleph_1$-sized collection obtained from Lemma \ref{lemma}.
Suppose there is an uncountable $U\subseteq \aleph_1$ and $(d_{\beta})_{\beta \in U}
\subseteq A$ such that
\[
[(a_{\alpha} + d_{\alpha}),(a_{\beta} + d_{\beta})]=0
\]
for all $\alpha, \beta \in U$. By using the pigeonhole principle,
we can suppose that $\lVert d_{\beta} \rVert \le M$ for some $M \in
\mathbb{R}$, and that
there is a unique $n\in \N$ such that $\lVert d_{\beta} - d_{\beta}e_n \rVert \le \frac{1}{64(M+1)}$
for all $\beta \in U$.

Therefore, for every $\beta \in U$ and all but finitely many $\alpha \in U$ such that $\alpha< \beta$,
we have
\[
0 = \lVert [(a_{\alpha} + d_{\alpha}),(a_{\beta} + d_{\beta})] \rVert \ge
\lVert 
[(a_{\alpha} + d_{\alpha}e_n),(a_{\beta} + d_{\beta}e_n)]
 \rVert - \frac{1}{16} \ge \frac{1}{16}.
\]
This is a contradiction when $\set{\alpha \in U: \alpha < \beta}$ is infinite.
\end{proof}

\begin{proof}[Proof of Corollary \ref{crlry}]
The proof follows verbatim the one given for Lemma \ref{lemma} and Theorem \ref{thrm}.
The only difference is that each time Kadison's Transitivity Theorem is invoked in Lemma \ref{lemma}, it is possible
to use a stronger version of such theorem for \cstar-algebras with real rank zero (see for instance
Theorem $6.5$ of \cite{proj}), which allows us to choose a projection at each step. This stronger
version of Kadison's Transitivity Theorem can be used throughout the whole iteration since
hereditary subalgebras of real rank zero \cstar-algebras have real rank zero.
\end{proof}

\section{Concluding remarks and questions} \label{sctn:3}
If $A$ is a commutative non-unital \cstar-algebra, then the problem of
lifting commuting elements from $Q(A)$ to $M(A)$ is trivial, as both $M(A)$ and $Q(A)$ are
abelian. In Section \ref{sctn:2} we ruled out this possibility by
asking for $A$ to be primitive.

The other important feature we required to prove Theorem \ref{thrm} is $\sigma$-unitality.
We do not know whether this assumption could be weakened,
but it certainly cannot be removed tout-court.
Indeed, there are extreme examples of primitive,
non-$\sigma$-unital
\cstar-algebras whose corona is finite-dimensional (see \cite{innder} and
\cite{ghasemi2016extension}), for which Theorem \ref{thrm} is trivially false.
Our conjecture is that there might be a condition on the order structure of the
approximate unit of $A$ which is weaker than $\sigma$-unitality, but still
makes Theorem \ref{thrm} true. For instance, it would be interesting to know whether
the techniques used in Theorem \ref{thrm} could be applied to the algebra of
the compact operators on a nonseparable Hilbert space, or more in general to a \cstar-algebra
$A$ with a projection $p \in M(A)$ such that $pAp$ is primitive, non-unital
and $\sigma$-unital.

We remark that the proof of
Theorem \ref{thrm} we gave can be adapted
to any primitive \cstar-algebra $A$ which admits an
increasing approximate unit $\set{e_{\alpha}}_{\alpha
\in \kappa}$, for $\kappa$ regular cardinal, to produce a $\kappa^+$-sized
family of orthogonal positive elements in $Q(A)$ which cannot be lifted to a set
of commuting elements in $M(A)$.

Another lifting problem that could be considered, is the following.
\begin{question} \label{quest}
Assume $F\subseteq Q(A)_{sa}$ is a commutative family
such that any smaller (in the sense of cardinality)
subset can be lifted to a set of commuting elements in $M(A)_{sa}$.
Can $F$ be lifted
to a collection of commuting elements in $M(A)_{sa}$?
\end{question}
Theorem \ref{thrm} and
Proposition \ref{prop} entail that this is not true in general for primitive, non-unital, $\sigma$-unital
\cstar-algebras if $\lvert F \rvert = \aleph_1$,
pointing out the set theoretic incompactness of $\aleph_1$ for
this property.

If the family $F$ is infinite and countable, then Question \ref{quest} has a positive answer in
the Calkin algebra.
\begin{proposition} \label{propfin}
Suppose that $A$ is a separable abelian \cstar-subalgebra of $Q(H)$
such that every finitely-generated subalgebra of $A$ has an abelian lift. Then $A$ has an abelian lift.
\end{proposition}
The proof of this proposition relies on Voiculescu's Theorem
\cite[Theorem 3.4.6]{higsonroe}, starting from the following lemma. Given a map $\phi: A \to Q(H)$,
we say that $\Phi: A \to B(H)$ \emph{lifts} $\phi$ if $\phi = \pi \circ \Phi$, where $\pi: B(H) \to Q(H)$
is the quotient map.
\begin{lemma} \label{lemmafin}
Let $A$ be a separable unital abelian \cstar-subalgebra of $Q(H)$.
If there exists a unital abelian \cstar-algebra $B \subseteq B(H)$ lifting $A$,
then there is a unital $*$-homomorphism $\Phi: A \to B(H)$ lifting the
identity map on $A$.
\end{lemma}
\begin{proof}
Since $B$ is abelian, there exists a masa (maximal abelian subalgebra) of $B(H)$ containing $B$.
Masas in $B(H)$ are von Neumann algebras and, as such, they are generated by their
projections. This entails that $A$ is contained in a separable unital
abelian subalgebra $C(Y)$ of $Q(H)$ which is generated by its projections. By
\cite[Theorem 1.15]{bdf} there exists a unital $*$-homomorphism $\Psi: C(Y) \to B(H)$
lifting the identity on $C(Y)$. Let $\Phi$ be the restriction of $\Psi$ to $C(X)$.
\end{proof}
\begin{proof}[Proof of Proposition \ref{propfin}]
Suppose that $F = \set{a_n}_{n \in \N} \subseteq Q(H)_{sa}$ is an abelian family such
that every finite subset of $F$ has a commutative lift. Without loss of generality, we can assume
that $a_0 = 1$.
By Lemma \ref{lemmafin} we can assume
that, for every $k \in \N$, there is a unital $*$-homomorphism
$\Phi_k: \mathrm{C}^\ast(\set{a_n}_{n \le k}) \to B(H)$ lifting the identity map on
$\mathrm{C}^\ast(\set{a_n}_{n \le k})$. By Voiculescu's Theorem \cite[Theorem 3.4.6]{higsonroe} we
can moreover assume that, for every $n \in \N$, the sequence $\set{\Phi_k(a_n)}_{k \ge n}$
converges to some self-adjoint operator $A_n$ in $B(H)$ such that $A_n - \Phi_k(a_n)$
is compact for every $k \in \N$. The family $\set{A_n}_{n \in \N}$ is a
commutative lifting of $\set{a_n}_{n \in \N}$.
\end{proof}

More general forms of Voiculescu's Theorem are known to hold for extensions of various
separable \cstar-algebras other than $K(H)$ (see \cite{ellvoicu}, \cite{gabe}, \cite[Section 2.2]{chris}).
Such generalizations could potentially be used to carry out the arguments exposed above for coronas
of other separable nuclear stable \cstar-algebras. We remark however the importance
of being able to lift separable abelian subalgebras of $Q(H)$ to abelian algebras in $B(H)$ with the
same spectrum, as guaranteed by Lemma \ref{lemmafin}. This is false in general in other
coronas, as it happens for instance when $A = \mathcal{Z} \otimes K(H)$. In this case, projections
in $Q(A)$ do not necessarily lift to projections in $M(A)$, since the former has real rank zero but the
latter has not (see \cite{linng}).

The following example shows that Question \ref{quest} has negative answer
for finite families with an even number of elements.
\begin{example}
Let $S^n$ be the $n$-dimensional
sphere. The algebra $C(S^n)$ is generated by $n+1$ self-adjoint elements $\set{h_i}_{0\le i \le n}$
satisfying the relation
\[
h_0^2 + \dots + h^2_n=1.
\]
Let $F=\set{h_i}_{0\le i \le n}$. The relation above implies that the joint
spectrum of a subset of $F$ of size $m\le n$
is the $m$-dimensional ball $B^m$. The space $B^m$ is contractible, therefore
the group $\text{Ext}(B^m)$ is trivial (see \cite[Section 2.6--2.7]{higsonroe}
for the definition of the functor $\text{Ext}$
and its basic properties). As a consequence, for any $[\tau] \in \text{Ext}(S^n)$,
any proper subset of $\tau[F]$ can be lifted to a set of commuting self-adjoint operators in
$B(H)$. On the other hand $\text{Ext}(S^{2k+1})
=\mathbb{Z}$ for every $k\in \N$. We conclude that
any non-trivial extension $\tau$ of $C(S^{2k+1})$ produces, by Lemma \ref{lemmafin}, a family
$\tau[F]$ of size $2k+2$ in the Calkin algebra for which Question \ref{quest} has negative answer.
\end{example}
The argument above does not apply to families of odd cardinality, since
$\text{Ext}(S^{2k})=\set{0}$ for every $k\in \N$. However, in \cite{ken2} (see also
\cite{voic}, \cite{lor3}), the author builds a set of three commuting self-adjoint
elements in the corona algebra of
$\bigoplus_{n\in \N} M_n(\C)$ with no commutative lifting to the multiplier algebra,
whose proper subsets of size two all admit a commutative lifting.
The answer to Question \ref{quest} for larger finite families with an odd number of elements is,
to the best of our knowledge, unknown.

\begin{acknowledgements}
My sincerest gratitude goes to Ilijas Farah, for his patience, and for
the impressive amount of crucial suggestions
he gave me while working on this problem and on the earlier drafts of this paper.
I wish to thank Alessandro Vignati for the stimulating conversations we had, for the knowledge
he shared with me, and for the suggestions on the earlier drafts of this paper.
I wish to thank George
Elliott for the incredible amount of interesting questions related to this topic he posed
and for having suggested me a more intuitive way to present this problem and
a more readable form of Lemma \ref{lemma}.
\end{acknowledgements}

\bibliographystyle{amsalpha}
	\bibliography{Bibliography}

\Addresses

\end{document}